\documentclass[10pt,reqno]{amsart}
\usepackage[T1]{fontenc}
\usepackage{lmodern}
\usepackage{graphicx}
\usepackage{amsmath,amssymb}
\usepackage{microtype}
\usepackage[colorlinks=true,linkcolor=blue,citecolor=red,urlcolor=blue]{hyperref}
\hypersetup{
  pdftitle={A Numerical Method for the Fractional Semilinear Calder\'on Problem},
  pdfauthor={Mukul Dwivedi, Jesse Railo, Andreas Rupp},
  pdfsubject={Numerical reconstruction for a fractional semilinear inverse problem},
  pdfkeywords={Fractional inverse problem, finite element method, single measurement}
}
\numberwithin{equation}{section}
\newtheorem{theorem}{Theorem}[section]
\newcommand{\DRRnorm}[1]{\lVert#1\rVert}

\title[The fractional semilinear Calder\'on problem]
{A Numerical Method for the Fractional Semilinear Calder\'on Problem}
\author[M. Dwivedi]{Mukul Dwivedi}
\address{Department of Mathematics, Saarland University, Saarbr\"ucken, Germany}
\email{mukul.dwivedi@uni-saarland.de}
\thanks{Corresponding author: Mukul Dwivedi.}
\author[J. Railo]{Jesse Railo}
\address{Computational Engineering, School of Engineering Sciences, Lappeenranta-Lahti University of Technology, Finland}
\email{jesse.railo@lut.fi}
\author[A. Rupp]{Andreas Rupp}
\address{Department of Mathematics, Saarland University, Saarbr\"ucken, Germany}
\email{andreas.rupp@uni-saarland.de}
\subjclass[2020]{35R30, 65N21, 65N30}
\keywords{Fractional inverse problem, finite element method, single measurement}
\date{}

\begin{document}

\begin{abstract}
We present a finite element method for reconstructing a coefficient in a fractional semilinear equation from a single exterior measurement.
\end{abstract}
\maketitle
\raggedbottom

\section{Introduction}\label{sec:intro}

We consider the inverse problem of determining the coefficient $q$ in
\begin{equation}\label{eq:model}
(-\Delta)^s u+qu^p=0\quad\text{in }\Omega,\qquad
u=f\quad\text{in }\Omega_e:=\mathbb R^d\setminus\overline\Omega.
\end{equation}
The fractional order $0<s<1$ and the integer $p\ge2$ are known. We prescribe one exterior datum $f$, supported in an open set $W_1\subset\Omega_e$. We measure $g:=((-\Delta)^su)|_{W_2}$ on another open set $W_2\subset\Omega_e$. Our aim is to reconstruct $q$ on a subdomain $D\Subset\Omega$ from this one pair $(f,g)$.

Unique continuation for the fractional Laplacian \cite[Thm. 1.2]{DRR_GSU} gives single-measurement reconstruction for the linear problem \cite[Thm. 1]{DRR_GRSU}. For semilinear equations, Lai--Lin \cite[Thm. 5.1]{DRR_LL} identify the nonlinear term at the solution values produced by one small exterior datum. Lipschitz stability on finite-dimensional coefficient classes is proved in \cite[Thm. 1.3]{DRR_Lin} using higher derivatives at zero of the map from exterior data to measurements. Here $q,f\ge0$ give a unique nonnegative solution without a smallness assumption on $f$.

We extend the Galerkin--Tikhonov method of \cite{DRR_potential} to the semilinear perturbation $qu^p$. The forward problem is nonlinear. The equation used to recover $u$ from exterior data is still linear. The power $u^p$ changes the propagation of errors and the determination of $q$ where $u$ is small. Our estimates quantify both of the latter effects.

\section{Setting and the existence of positive solutions}\label{sec:setting}

Let $\Omega\subset\mathbb R^d$ be a bounded Lipschitz domain, and let $D\Subset\Omega$ be a bounded Lipschitz subdomain. Let $W_1,W_2\Subset\Omega_e$ be nonempty bounded open sets. Write $X=\widetilde H^s(\Omega):=\overline{C_c^\infty(\Omega)}^{H^s(\mathbb R^d)}$. Define
\begin{equation}\label{eq:energy}
\begin{aligned}
a_s(v,z)&=\frac{C_{d,s}}2\iint_{\mathbb R^{2d}}
\frac{(v(x)-v(y))(z(x)-z(y))}{|x-y|^{d+2s}}\,dx\,dy,
\\
\DRRnorm v_X^2&=a_s(v,v)+\DRRnorm v_\Omega^2.
\end{aligned}
\end{equation}
Here $(\cdot,\cdot)_G$ and $\DRRnorm\cdot_G$ denote the $L^2(G)$ inner product and norm for an open set \(G\subset \mathbb R^d\), and we use $C_{d,s}=4^s\Gamma(d/2+s)/(\pi^{d/2}|\Gamma(-s)|)$. Write $(v,z)_X=a_s(v,z)+(v,z)_\Omega$. Assume $q\in L^\infty(\Omega)$, $0\le q\le Q<\infty$, and $0\ne f\in C_c^\infty(W_1)$, with $f\ge0$. Extend $f$ by zero and set $M=\|f\|_{L^\infty(\mathbb R^d)}$.

\textit{Existence and bounds.} Write $t_+=\max\{t,0\}$ and $t_-=\max\{-t,0\}$. Set $T(t)=\min\{M,t_+\}$ and $\Phi(t)=\int_0^tT(r)^p\,dr$. Minimize the functional 
\begin{equation}\label{eqn:functional}
\tfrac12a_s(f+v,f+v)+\int_\Omega q\Phi(v)\qquad  \text{ over }\quad  v\in X.     
\end{equation}
The fractional Poincar\'e inequality $\|z\|_{L^2(\Omega)}^2\le C a_s(z,z)$ for $z\in X$ implies that \eqref{eqn:functional} is coercive on $X$. The quadratic energy is strictly convex, and the nonlinear term is convex because $q\ge0$ and $\Phi$ is convex. Both terms are weakly lower semicontinuous. The direct method of the calculus of variations therefore gives a unique minimizer $v\in X$. Then $u=f+v$ satisfies $a_s(u,z)+(qT(u)^p,z)_\Omega=0$ for $z\in X$, since $f=0$ in $\Omega$. Both $u_-$ and $(u-M)_+$ belong to $X$, since $0\le f\le M$ outside $\Omega$. Since $T(u)^pu_-=0$, the first test gives $0=a_s(u,u_-)\le-a_s(u_-,u_-)$. Thus $u\ge0$. For $z=(u-M)_+$, both terms are nonnegative and $a_s(u,z)\ge a_s(z,z)$, so $z=0$. Hence $0\le u\le M$, $T(u)=u$, and $u$ solves \eqref{eq:model}. Subtracting the equations for two nonnegative solutions and testing with their difference proves uniqueness by the monotonicity of $t^p$ on $[0,\infty)$.

\textit{Positivity of the solution $u$ on $D$.} In fact, $u\ge m_D>0$ almost everywhere on $D$. To see this, write $v=u-f$. Then $(-\Delta)^sv+qv^p=h_f$ in $\Omega$, where $h_f=-(-\Delta)^sf$. The kernel formula gives $h_0:=\inf_\Omega h_f>0$. Choose $\psi\in C_c^\infty(\Omega)$ with $0\le\psi\le1$ and $\psi=1$ on $D$. Choose $t>0$ so that $t\|(-\Delta)^s\psi\|_\infty+Qt^p\le h_0$. Then $t\psi$ is a subsolution of the equation for \(v\). Testing the difference of the two inequalities with $(t\psi-v)_+$ gives $v\ge t\psi$. We may therefore take $m_D=t$. Unique continuation \cite[Thm. 1.2]{DRR_GSU} shows that two solutions with the same exterior data and measurement coincide. Their positivity on $D$ then identifies $q$ there through \eqref{eq:model}.

\textit{A linear equation for the unknown solution.} Let $Y=L^2(W_2)$. For $z\in X$, define
\begin{equation}\label{eq:continuation}
\mathcal Lz(x)=-C_{d,s}\int_\Omega
\frac{z(y)}{|x-y|^{d+2s}}\,dy\quad(x\in W_2),
\qquad
\mathcal Lz=((-\Delta)^sz)|_{W_2}.
\end{equation}
The sets $W_2$ and $\Omega$ are separated. Thus the kernel is square integrable on $W_2\times\Omega$, and $\mathcal L:X\to Y$ is compact. It is injective by \cite[Thm. 1.2]{DRR_GSU}.

First compute $u_0\in f+X$ satisfying $a_s(u_0,z)=0$ for all $z\in X$. This is \eqref{eq:model} with $q=0$ and depends only on $f,s,\Omega$. Set $r=u-u_0\in X$ and $g_0=((-\Delta)^su_0)|_{W_2}$. Linearity gives \( \mathcal Lr=g-g_0=:\mu\).
We recover $r$ by regularizing this linear observation equation, and then set $u=u_0+r$. 

\section{Three reconstruction steps}\label{sec:scheme}

Let $V_{h,0}\subset X$ be finite-dimensional spaces with $\inf_{z_h\in V_{h,0}}\|z-z_h\|_X\to0$ as \(h\to 0\) for every $z\in X$. On fitted meshes, continuous piecewise affine functions with zero exterior values give one choice for $0<s<1$; see \cite[Lem. 3.6]{DRR_potential}. We use conforming approximation spaces for $X$ and piecewise constants for the coefficient. Let $Y_\ell$ consist of constants on the cells of an observation partition $\mathcal T_\ell^W$. Its $L^2(W_2)$-orthogonal projection is $(\Pi_\ell y)|_J=|J|^{-1}\int_Jy$. Assume $\|\Pi_\ell y-y\|_Y\to0$ for every $y\in Y$. Related sampled observations are treated in \cite[Sec. 3.1]{DRR_potential}. The limits correspond to interior refinement $h\to0$ and observation refinement $\ell\to0$.

After subtracting the approximation of $g_0$, the measured data $\mu_\ell^\delta\in Y_\ell$ satisfy
\begin{equation}\label{eq:dataerror}
\|\mu_\ell^\delta-\Pi_\ell\mu\|_Y
\le\varepsilon_\delta:=\delta G_\ell+\eta_{\rm data},
\qquad
G_\ell:=\|\Pi_\ell g\|_Y.
\end{equation}
Here $\delta$ is relative noise, $\varepsilon_\delta$ is an absolute error bound, and $\eta_{\rm data}$ accounts for numerical measurement and subtraction errors. Initially, use exact $u_0$ and exact integrals.

\textit{Step 1: approximate $u$.} For $\alpha>0$, set $U_h=u_0+r_h$, where
\begin{equation}\label{eq:state}
r_h=\mathop{\rm argmin}_{z_h\in V_{h,0}}
\{\DRRnorm{\Pi_\ell\mathcal Lz_h-\mu_\ell^\delta}_Y^2
+\alpha\DRRnorm{z_h}_X^2\}.
\end{equation}
Here $\alpha$ is a stabilization parameter. Since $\alpha>0$, the functional is coercive and strictly convex on $V_{h,0}$, so $r_h$ exists and is unique.

\textit{Step 2: approximate $(-\Delta)^su$.} Set $w:=((-\Delta)^su)|_\Omega=-qu^p\in L^2(\Omega)$. Convergence $\|U_h-u\|_X\to0$ does not by itself ensure that $(-\Delta)^sU_h$ converges to $w$ in $L^2(\Omega)$. We approximate $w$ using the weak identity $a_s(u,z)=(w,z)_\Omega$ for $z\in X$. For $\tau>0$, define $w_h^\tau\in V_{h,0}$ by
\begin{equation}\label{eq:post}
(w_h^\tau,z_h)_\Omega+\tau a_s(w_h^\tau,z_h)
=a_s(U_h,z_h)\quad(z_h\in V_{h,0}).
\end{equation}
Since $U_h=u_0+r_h$ and $a_s(u_0,z_h)=0$, the right-hand side of \eqref{eq:post} equals $a_s(r_h,z_h)$.

\textit{Step 3: approximate $q$.} Set $\widehat U_h=\min\{M,\max\{-M,U_h\}\}$. This enforces $|\widehat U_h|\le M$ and gives $|\widehat U_h-u|\le|U_h-u|$. Let $Q_h$ be the space of constants on each cell of a partition $\mathcal T_h^D$ of $D$. For $\beta>0$, define
\begin{equation}\label{eq:coefficient}
q_h=\mathop{\rm argmin}_{t_h\in Q_h}
\{\DRRnorm{w_h^\tau+\widehat U_h^pt_h}_D^2
+\beta\DRRnorm{t_h}_D^2\}.
\end{equation}
Since $\beta>0$, the functional in \eqref{eq:coefficient} is coercive and strictly convex on $Q_h$. Its unique minimizer $q_h$ is characterized by setting the derivative in every direction $t_h\in Q_h$ equal to zero, that is,
\begin{equation}\label{eq:normal}
(\widehat U_h^pq_h,\widehat U_h^pt_h)_D+\beta(q_h,t_h)_D
=-(w_h^\tau,\widehat U_h^pt_h)_D\quad(t_h\in Q_h).
\end{equation}
Taking $t_h=\mathbf{1}_K$ in \eqref{eq:normal}, where $\mathbf{1}_K$ is the indicator of a coefficient cell $K$, gives
\begin{equation}\label{eq:cell}
q_h|_K=-\frac{\int_Kw_h^\tau\widehat U_h^p}
{\int_K|\widehat U_h|^{2p}+\beta|K|},
\qquad
c_{h,p}:=\min_{K\in\mathcal T_h^D}\frac{1}{|K|}\int_K|\widehat U_h|^{2p}.
\end{equation}
Thus $\|\widehat U_h^pt_h\|_D^2\ge c_{h,p}\|t_h\|_D^2$ for all $t_h\in Q_h$. Small values of $\widehat U_h$ make the coefficient poorly determined on that cell. The constant $c_{h,p}$ plays the same role as the visibility constant in \cite[Eq. (4.36) and Thm. 4.6]{DRR_conductivity}. 

\section{Error estimates}\label{sec:analysis}

Let $r_\alpha\in X$ minimize $\|\mathcal Lz-\mu\|_Y^2+\alpha\|z\|_X^2$. Define
\[
b(\alpha)=\|r_\alpha-r\|_X,\qquad
\eta_h(\alpha)=\inf_{z_h\in V_{h,0}}\|z_h-r_\alpha\|_X,\qquad
\kappa_\ell=\|(I-\Pi_\ell)\mathcal L\|_{X\to Y},
\]
where $I$ is the identity on $Y$. These are the regularization error, the interior approximation error and the error due to observation projection. Write $L=\|\mathcal L\|_{X\to Y}$. Then
\begin{equation}\label{eq:stateerror}
\|U_h-u\|_X\le\mathcal E
:=b(\alpha)+\sqrt{1+L^2/\alpha}\,\eta_h(\alpha)
+\frac{\kappa_\ell\|r\|_X}{\sqrt\alpha}
+\frac{\delta G_\ell+\eta_{\rm data}}{2\sqrt\alpha}.
\end{equation}
The proof follows the error splitting in \cite[Thm. 3.11]{DRR_potential}, with $b(\alpha)$ retained without a source condition. Projection in $\|z\|_{\alpha,\ell}^2=\|\Pi_\ell\mathcal Lz\|_Y^2+\alpha\|z\|_X^2$ gives the interior term. Comparing the optimality equations for $r_\alpha$ and the exact-data version of \eqref{eq:state}, using the preceding projection and $\|\mu-\mathcal Lr_\alpha\|_Y\le\sqrt\alpha\|r\|_X$, gives the term $\kappa_\ell\|r\|_X/\sqrt{\alpha}$. The noisy and exact-data normal equations of \eqref{eq:state}, with \eqref{eq:dataerror} and Young's inequality, give $\varepsilon_\delta/(2\sqrt\alpha)$. Injectivity gives $b(\alpha)\to0$ as $\alpha\to0$. Since $\mathcal L:X\to Y$ is compact and the orthogonal projections satisfy $\|\Pi_\ell y-y\|_Y\to0$ for every $y\in Y$, this convergence is uniform on the image under $\mathcal L$ of the unit ball of $X$. Hence $\kappa_\ell=\|(I-\Pi_\ell)\mathcal L\|_{X\to Y}\to0$.

\begin{theorem}\label{thm:error}
 Let $d\ge1$, $0<s<1$ and $p\ge2$ be an integer. Let $\Omega$ and $D\Subset\Omega$ be bounded Lipschitz domains. Let $W_1,W_2\Subset\Omega_e$ be nonempty bounded open sets. Assume $q\in L^\infty(\Omega)$, $0\le q\le Q<\infty$, and $0\ne f\in C_c^\infty(W_1)$ with $f\ge0$. Fix $\alpha,\tau,\beta>0$. Let $r_h$, $w_h^\tau$ and $q_h$ solve \eqref{eq:state}, \eqref{eq:post} and \eqref{eq:normal}, respectively, using exact $u_0$ and exact integration. Let $P_h:L^2(D)\to Q_h$ be the orthogonal projection. Define $\varepsilon_h=\|P_hq-q\|_D$, $E_u=\|\widehat U_h-u\|_D$, $E_h=a_s(U_h-u,U_h-u)^{1/2}$ and $E_w=\|w_h^\tau-w\|_\Omega$. With $\mathcal E$ defined in \eqref{eq:stateerror}, we have $E_u\le\mathcal E$ and $E_h\le\mathcal E$, and
\begin{align}
E_w\le\frac{E_h}{2\sqrt\tau}+A_h(\tau),\qquad
A_h(\tau):=\inf_{z_h\in V_{h,0}}
\big(\|z_h-w\|_\Omega^2+\tau a_s(z_h,z_h)\big)^{1/2},
\label{eq:werror}
\end{align}
\begin{align}
\|q_h-q\|_D\le\varepsilon_h+
\frac{E_w+pM^{p-1}\|q\|_{L^\infty(D)}E_u+M^p\varepsilon_h}
{\sqrt{c_{h,p}+\beta}}
+\frac{\beta\|P_hq\|_D}{c_{h,p}+\beta}.
\label{eq:qerror}
\end{align}
\end{theorem}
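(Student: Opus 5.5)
The proof splits into three stages, one for each reconstruction step, and each stage feeds into the next.

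\emph{State bounds.} Since $\|z\|_X^2=a_s(z,z)+\|z\|_\Omega^2$, we have $E_h\le\|U_h-u\|_X\le\mathcal E$ directly from \eqref{eq:stateerror}. For $E_u$, we use $D\subset\Omega$. The truncation $\widehat U_h$ is $1$-Lipschitz and $|u|\le M$, so $|\widehat U_h-u|\le|U_h-u|$ pointwise. Hence $E_u\le\|U_h-u\|_\Omega\le\|U_h-u\|_X\le\mathcal E$.

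\emph{Post-processing bound \eqref{eq:werror}.} Define the exact-state discrete function $\tilde w_h\in V_{h,0}$ by $(\tilde w_h,z_h)_\Omega+\tau a_s(\tilde w_h,z_h)=a_s(u,z_h)=(w,z_h)_\Omega$ for $z_h\in V_{h,0}$. This $\tilde w_h$ minimizes $\|z_h-w\|_\Omega^2+\tau a_s(z_h,z_h)$ over $V_{h,0}$. Indeed, expanding $\|z_h-w\|_\Omega^2+\tau a_s(z_h,z_h)$ shows that its Euler--Lagrange equation is exactly this identity. Therefore $\|\tilde w_h-w\|_\Omega\le A_h(\tau)$.

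Next set $e=w_h^\tau-\tilde w_h\in V_{h,0}$. It satisfies $(e,z_h)_\Omega+\tau a_s(e,z_h)=a_s(U_h-u,z_h)$. Testing with $z_h=e$ and applying Cauchy--Schwarz in $a_s$ gives
\[
\|e\|_\Omega^2+\tau a_s(e,e)\le E_h\,a_s(e,e)^{1/2}.
\]
Young's inequality bounds the right-hand side by $\tau a_s(e,e)+E_h^2/(4\tau)$. Thus $\|e\|_\Omega\le E_h/(2\sqrt\tau)$, and the triangle inequality gives \eqref{eq:werror}.

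\emph{Coefficient bound \eqref{eq:qerror}.} This is the main step. Split $q_h-q=(q_h-P_hq)+(P_hq-q)$; the second term contributes $\varepsilon_h$. Set $e_h=q_h-P_hq\in Q_h$. On $D$ we have $w=-qu^p$. Starting from \eqref{eq:normal}, subtract $(\widehat U_h^pP_hq,\widehat U_h^pt_h)_D+\beta(P_hq,t_h)_D$ from both sides. This gives
\[
(\widehat U_h^pe_h,\widehat U_h^pt_h)_D+\beta(e_h,t_h)_D=-(R,\widehat U_h^pt_h)_D-\beta(P_hq,t_h)_D ,
\]
with residual
\[
R=w_h^\tau+\widehat U_h^pP_hq=(w_h^\tau-w)+q(\widehat U_h^p-u^p)+\widehat U_h^p(P_hq-q).
\]
Both $\widehat U_h$ and $u$ lie in $[-M,M]$, so the mean value theorem gives $|\widehat U_h^p-u^p|\le pM^{p-1}|\widehat U_h-u|$. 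Also $|\widehat U_h^p|\le M^p$. Hence
\[
\|R\|_D\le E_w+pM^{p-1}\|q\|_{L^\infty(D)}E_u+M^p\varepsilon_h.
\]

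Now test with $t_h=e_h$ and write $N^2=\|\widehat U_h^pe_h\|_D^2+\beta\|e_h\|_D^2$. By \eqref{eq:cell}, $N^2\ge(c_{h,p}+\beta)\|e_h\|_D^2$. The residual term satisfies $|(R,\widehat U_h^pe_h)|\le\|R\|_DN$, since $\|\widehat U_h^pe_h\|_D\le N$. The second term satisfies $\beta|(P_hq,e_h)|\le\beta\|P_hq\|_D\|e_h\|_D\le\beta\|P_hq\|_DN/\sqrt{c_{h,p}+\beta}$. Dividing by $N$ gives
\[
N\le\|R\|_D+\frac{\beta\|P_hq\|_D}{\sqrt{c_{h,p}+\beta}}.
\]
Finally, $\|e_h\|_D\le N/\sqrt{c_{h,p}+\beta}$, which yields exactly the two fractions in \eqref{eq:qerror}. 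The obstacle is keeping the weighted norm $N$ rather than bounding each term separately, so that the exponents $1/2$ and $1$ on $c_{h,p}+\beta$ come out as stated.
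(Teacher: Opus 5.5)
Your proposal is correct and follows the paper's proof essentially step for step: you use the exact-state discrete solution $\tilde w_h$ and the energy test to get $E_h/(2\sqrt\tau)$, and you bound the coefficient error through the weighted norm $N$ with the same residual $R$. The only thing to add is the trivial case $e_h=0$ (equivalently $N=0$, since $\beta>0$) before you divide by $N$, which the paper treats separately.
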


\begin{proof}
The bounds $E_u\le\mathcal E$ and $E_h\le\mathcal E$ follow from \eqref{eq:stateerror} and the definition of $\widehat U_h$. Let $y_h\in V_{h,0}$ solve \eqref{eq:post} with $u$ in place of $U_h$. Since $a_s(u,z_h)=(w,z_h)_\Omega$, the function $y_h$ minimizes $\|z_h-w\|_\Omega^2+\tau a_s(z_h,z_h)$. Thus $\|y_h-w\|_\Omega\le A_h(\tau)$. For $d_h=w_h^\tau-y_h$, subtraction and testing with $d_h$ give
\[
\|d_h\|_\Omega^2+\tau a_s(d_h,d_h)
\le E_h a_s(d_h,d_h)^{1/2}.
\]
The maximum of $E_ht-\tau t^2$ over $t\ge0$ is $E_h^2/(4\tau)$. Hence $\|d_h\|_\Omega\le E_h/(2\sqrt\tau)$. The triangle inequality proves \eqref{eq:werror}.

For the coefficient estimate, write $T_ht=\widehat U_h^pt$ and $\|t\|_{\beta,h}^2=\|T_ht\|_D^2+\beta\|t\|_D^2$. Set $e=q_h-P_hq$. Equation \eqref{eq:normal} gives $\|e\|_{\beta,h}^2=-(R,T_he)_D-\beta(P_hq,e)_D$, where
\[
R=(w_h^\tau-w)+(\widehat U_h^p-u^p)q+\widehat U_h^p(P_hq-q).
\]
For $|a|,|b|\le M$, we have $|a^p-b^p|\le pM^{p-1}|a-b|$. Therefore $\|R\|_D\le E_w+pM^{p-1}\|q\|_{L^\infty(D)}E_u+M^p\varepsilon_h$. Also, $\|T_he\|_D\le\|e\|_{\beta,h}$ and $\sqrt{c_{h,p}+\beta}\|e\|_D\le\|e\|_{\beta,h}$ by \eqref{eq:cell}. If $e\ne0$, division gives $\|e\|_{\beta,h}\le\|R\|_D+\beta\|P_hq\|_D/\sqrt{c_{h,p}+\beta}$. Dividing once more and using $\|q_h-q\|_D\le\|e\|_D+\varepsilon_h$ proves \eqref{eq:qerror}. The case $e=0$ is immediate.
\end{proof}

\textit{Errors in numerical evaluation.} The preceding theorem uses the exact \(s\)-harmonic solution $u_0$. In the numerical experiments, we evaluate its Poisson formula by quadrature, see \cite{DRR_Bucur}, and prescribe $f$ outside $\Omega$. Denote this approximation by $\widetilde u_0\in f+X$. The computed solution is then $U_h=\widetilde u_0+r_h$. Define $\varepsilon_0=\|\widetilde u_0-u_0\|_X$, and include the numerical error in $g_0$ in $\eta_{\rm data}$. For $z_h\in V_{h,0}$ and $t_h\in Q_h$, set $\|z\|_\tau^2=\|z\|_\Omega^2+\tau a_s(z,z)$ and define the residual functionals and their dual norms by
\[
\begin{aligned}
R_s(z_h)&=(\Pi_\ell\mathcal Lr_h-\mu_\ell^\delta,\Pi_\ell\mathcal Lz_h)_Y+\alpha(r_h,z_h)_X,\\
R_w(z_h)&=(w_h^\tau,z_h)_\Omega+\tau a_s(w_h^\tau,z_h)-a_s(U_h,z_h),\\
R_q(t_h)&=(w_h^\tau+\widehat U_h^pq_h,\widehat U_h^pt_h)_D+\beta(q_h,t_h)_D,\\
 \end{aligned}
 \]
\[
\rho_s=\sup_{0\ne z_h\in V_{h,0}}\frac{|R_s(z_h)|}{\|z_h\|_{\alpha,\ell}},\quad
\rho_w=\sup_{0\ne z_h\in V_{h,0}}\frac{|R_w(z_h)|}{\|z_h\|_\tau},\quad
\rho_q=\sup_{0\ne t_h\in Q_h}\frac{|R_q(t_h)|}{\|t_h\|_D}.
\]
The residuals are defined with exact integrals, so they also account for errors in numerical integration and matrix assembly. If the right-hand side of \eqref{eq:post} is evaluated as $a_s(r_h,z_h)$, the error $-a_s(\widetilde u_0,z_h)$ is included in $R_w(z_h)$ and is measured by $\rho_w$. Set

\[
\mathcal B_*:=b(\alpha)+\sqrt{1+L^2/\alpha}\,\eta_h(\alpha)
+\frac{\kappa_\ell\|r\|_X+\eta_{\rm data}/2+\rho_s}{\sqrt\alpha}
+\varepsilon_0,
\]
 $\theta=c_{h,p}+\beta$, and $C_{\tau,p}=(2\sqrt\tau)^{-1}+pM^{p-1}\|q\|_{L^\infty(D)}$. Applying the testing arguments used for \eqref{eq:stateerror} and in the proof of Theorem \ref{thm:error} to the equations with residuals $R_s,R_w,R_q$ gives
\begin{equation}\label{eq:totalerror}
\begin{split}
\|q_h-q\|_D\le{}&
\left(1+\frac{M^p}{\sqrt\theta}\right)\varepsilon_h
+\frac{A_h(\tau)+\rho_w}{\sqrt\theta}
+\frac{C_{\tau,p}\mathcal B_*}{\sqrt\theta}\\
&+\frac{C_{\tau,p}\delta G_\ell}{2\sqrt{\alpha\theta}}
+\frac{\beta\|P_hq\|_D+\rho_q}{\theta}.
\end{split}
\end{equation}
The fourth term shows the effect of measurement noise. Equation \eqref{eq:totalerror} gives $q_h\to q$ in $L^2(D)$ under the sufficient conditions that for \( h,\ell,\alpha,\tau,\beta\to0 \)
\[
 \liminf c_{h,p}>0,\quad \varepsilon_h,\rho_w,\rho_q\to0, \quad
\mathcal B_*/\sqrt\tau\to0,\quad \delta G_\ell/\sqrt{\alpha\tau}\to0.
\]
The condition $\liminf c_{h,p}>0$ is an additional assumption.

A logarithmic estimate could be sought through a continuous coefficient comparison as in \cite[Thm. 3.16 and Cor. 3.17]{DRR_potential}, but this requires uniform regularity and forward bounds, a controlled PDE residual throughout $\Omega$, and a stability theorem covering the chosen exterior sets.

\section{Numerical experiments}\label{sec:numerics}

We take $s=0.35$, $p\in\{1,2,3\}$, $\Omega=B_1(0)\subset\mathbb R^d$ and $D=B_{0.8}(0)$, with $d=1,2$. Here $B_R(0)$ is the open ball of radius $R$. The case $p=1$ provides a linear comparison.  Define $\zeta(t)=\exp(1-(1-t^2)^{-1})$ for $|t|<1$ and $\zeta(t)=0$ otherwise. Set
\begin{equation}\label{eq:numdata}
q(x)=\zeta\!\left(\frac{|x|}{0.75}\right),\qquad
f(x)=16\zeta\!\left(\frac{|x|-1.6}{0.2}\right).
\end{equation}
Thus $q\in C_c^\infty(D)$ and $M=16$. We choose $W_1=\{1.35<|x|<1.85\}$ and measure on $W_2=\{1.05<|x|<1.3\}\cup\{2<|x|<3\}$.

The numerical spaces contain only functions of $|x|$. For $h=1/n$, let $K_j=\{x:(j-1)h<|x|<jh\}$, $j=1,\ldots,n$. We approximate $r$ and $w$ by constants on these cells and use the cells contained in $D$ for $q_h$. Since $s<1/2$, these cell indicators belong to $X$; approximation is understood in its radial subspace. We take $h\in\{0.04,0.02,0.01,0.005,0.0025\}$, where $h$ is the width in the radial variable $|x|$.

Synthetic data use a separate nonlinear solver using Newton-type iteration with 257 weighted Jacobi basis functions \cite[Thm. 3]{DRR_DKK}. We compute $\widetilde u_0$ by 128-point Gauss--Legendre quadrature on $(1.4,1.8)$ in the radial ball Poisson formula \cite[Thm. 2.10]{DRR_Bucur}, and set $\widetilde u_0=f$ outside $\Omega$. Kernel quadrature then gives the approximation of $g_0$. Observations use 120 and 80 radial cells on the two intervals. 

\begin{figure}[tb]
\centering
\includegraphics[width=\linewidth]{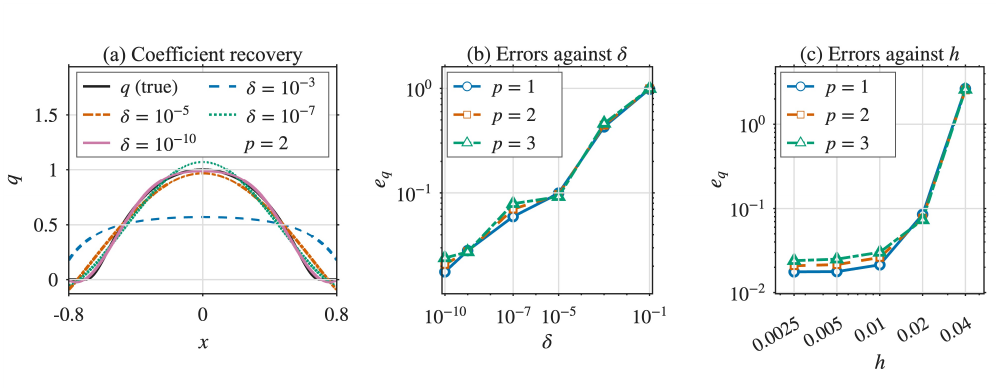}
\caption{(a): exact and reconstructed coefficients for $p=2$ at $h=0.0025$. (b): mean relative error $e_q$ against $\delta$ for $p=1,2,3$. (c): $e_q$ against $h$ at $\delta=10^{-10}$. }
\label{fig:ex1}
\end{figure}

For observation cells $J_i$, let $W=\operatorname{diag}(|J_i|)$ and $\|\mathbf z\|_W^2=\mathbf z^\top W\mathbf z$. Let $\mathbf g$ contain the cell averages of the full measurement $g$. We add noise before subtracting $g_0$:
\[
\mathbf g^\delta=\mathbf g+\delta\|\mathbf g\|_W
\frac{\boldsymbol\xi}{\|\boldsymbol\xi\|_W},\qquad
\delta\in\{10^{-1},10^{-3},10^{-5},10^{-7},10^{-9},10^{-10}\}.
\]
The entries of $\boldsymbol\xi$ are independent standard Gaussian variables, as in \cite[Sec. 4]{DRR_potential}. Five normalized vectors are reused across the three powers \(p\), noise levels and meshes. For realization $k$, set $e_q^{(k)}=\|q_h^{(k)}-q\|_D/\|q\|_D$. The plots show the mean relative error $e_q=\frac15\sum_{k=1}^5e_q^{(k)}$ and the coefficient from the first realization.

The noise study fixes $h=0.0025$ and $\tau=2.5\times10^{-7}$. For each data vector, we choose the largest candidate $\alpha=10^{-j/10}$, $j=0,\ldots,160$, for which $\|\Pi_\ell\mathcal Lr_h-\mu_\ell^\delta\|_Y\le1.05\delta\|\mathbf g\|_W$. We then set $\beta=10^{-10}|D|^{-1}\int_D|\widehat U_h|^{2p}$. The mesh study fixes $\delta=10^{-10}$. 

\noindent\textbf{Example 1: an interval.} We take $\Omega=(-1,1)$ and $D=(-0.8,0.8)$. Figure \ref{fig:ex1} shows improved reconstruction as noise decreases. Mesh and noise refinement reduce the mean error.

\noindent\textbf{Example 2: a disk.} We take $\Omega=B_1(0)\subset\mathbb R^2$. At $h=0.0025$, there are 400 unknown values for $r_h$, 320 coefficient values and 200 observations. The operator is two-dimensional, but only radial coefficients are reconstructed. Figure \ref{fig:ex2} shows decreasing mean error as noise decreases.

\begin{figure}[t]
\centering
\includegraphics[width=\linewidth]{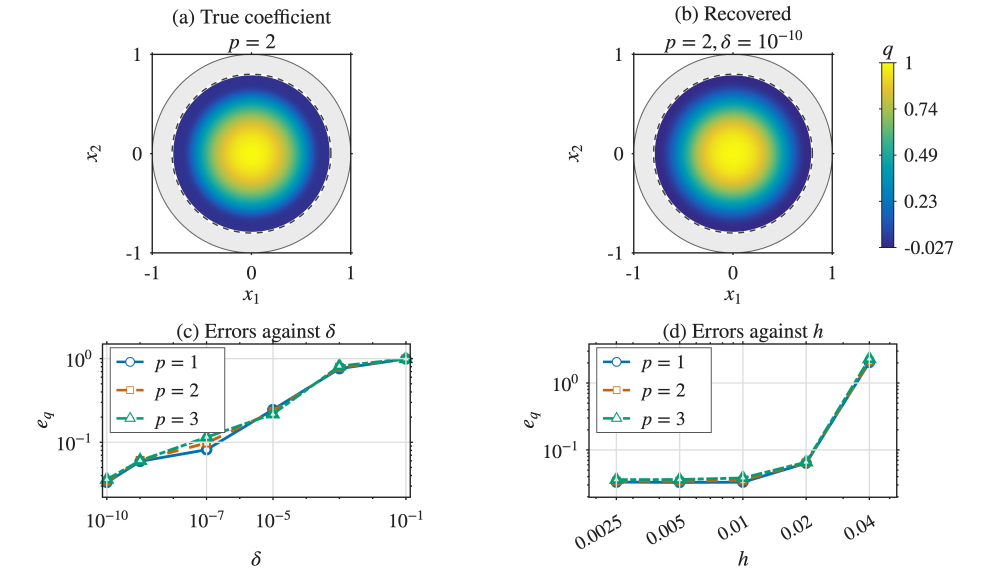}
\caption{(a) and (b): exact and reconstructed coefficients for $p=2$ at $\delta=10^{-10}$ and $h=0.0025$. Grey indicates $\Omega\setminus D$. (c) and (d): mean relative error $e_q$ against $\delta$ and $h$ for $p=1,2,3$. }
\label{fig:ex2}
\end{figure}

For each $p$, the mean coefficient error decreases as $\delta$ decreases, while the ordering of the errors for $p=1,2,3$ depends on the noise level. The errors have similar trends for each \(p=1,2,3\), although their values differ slightly.

\medskip
\noindent\textbf{Code and data availability.}
The MATLAB code, data and numerical supplement are available in the
\href{https://github.com/AndreasRupp/fractional-semilinear-calderon}{GitHub repository}.

\noindent\textbf{Acknowledgements.} Dwivedi and Rupp have been supported by the Deutsche Forschungsgemeinschaft (DFG, German Research Foundation) -- 577175348. Railo has been supported by the Research Council of Finland through the Flagship of Advanced Mathematics for Sensing, Imaging and Modelling (grant number 359183), the Emil Aaltonen Foundation, and the Jenny and Antti Wihuri Foundation.

\end{document}